\theoremstyle{plain}
\newtheorem{thm}{Theorem}[section]
\newtheorem{prop}[thm]{Proposition}
\newtheorem{lem}[thm]{Lemma}
\theoremstyle{definition}
\newtheorem{defn}[thm]{Definition}
\theoremstyle{remark}
\newtheorem{remark}[thm]{Remark}
\numberwithin{equation}{section}
\renewcommand{\epsilon}{\varepsilon}
\renewcommand{\phi}{\varphi}
\renewcommand{\preceq}{\preccurlyeq}
\renewcommand{\succeq}{\succcurlyeq}
\newcommand{\N}{\mathbb{N}}
\newcommand{\de}{\delta}
\newcommand{\lb}{\lbrace}
\newcommand{\lk}{\lbrack}
\newcommand{\rb}{\rbrace}
\newcommand{\rk}{\rbrack}
\newcommand{\ra}{\rightarrow}
\newcommand{\da}{\downarrow}
\begin{document}

\title{There is a Deep 1-Generic Set}

\author{Ang Li}

\date{\today}

\begin{abstract}
An infinite binary sequence is Bennett deep if, for any computable time bound, the difference between the time-bounded prefix-free Kolmogorov complexity and the prefix-free Kolmogorov complexity of its initial segments is eventually unbounded. It is known that weakly 2-generic sets are shallow, i.e.\ not deep. In this paper, we show that there is a deep $1$-generic set. 
\end{abstract}

\maketitle

\section{Introduction}
In \cite{Deep}, Bennett defined logical \emph{depth} to capture the difference between sets containing a lot of ``useful information'' that cannot be acquired in a short time and sets with noise and useless information. For example, a math paper is deep in the sense that the theorems in it can be produced from the basic definitions but reproducing these theorems might take a long time. Given an initial segment of a binary string corresponding to a deep set, a compressor should not be able to give us the complexity of the string within computable time bounds---the compression becomes better when more time is allowed. 
\par
Prefix-free Kolmogorov complexity and its resource-bounded version were used by Bennett to define depth. A \emph{prefix-free} machine $M$ is a partial function from $\lb0,1\rb^{\ast}$ to $\lb0,1\rb^{\ast}$ such that no element in the domain is the prefix of any other element in the domain.
\begin{defn}
The \emph{prefix-free Kolmogorov complexity} $K_M(x)$ of a binary string $x$ with respect to a prefix-free machine $M$ is
    $$K_M(x):=\min\lb|\sigma|:M(\sigma)=x\rb,$$ where $|\sigma|$ is the length of $\sigma$.
\end{defn}
There exists optimal prefix-free machines in the following sense:
\begin{defn}
We say that a machine $R$ is an \emph{optimal} prefix-free machine if $R$ is prefix-free, and for each prefix-free machine $M$ there is a constant $d_M$ such that $$(\forall x)
\lk K_R(x)\leq K_M(x)+d_M\rk.$$ The constant $d_M$ is called the coding constant of $M$ (with respect to $R$).
\end{defn}
From now on, we fix an optimal machine $\mathbb{U}$ by setting $\mathbb{U}(0^{e-1}1\sigma)=M_e(\sigma)$ for any $e,\sigma$, where $\lb M_e\rb_{e\in\N}$ is defined in \ref{g0}. For any natural number $s$, $K_s(x) = \min\lb|\sigma|:\mathbb{U}(\sigma) = x\mathrm{\ in\ at\ most\ }s\mathrm{\ many\ steps}\rb$. See Downey and Hirschfeldt \cite{downey} and Nies \cite{Nies} for more information about Kolmogorov complexity.
\begin{defn}
For a computable function $t\colon\mathbb{N}\ra\mathbb{N}$ and a prefix-free machine $M$, the \emph{prefix-free Kolmogorov complexity with time bound $t$} relative to $M$ is $$K_M^t(x):=\min\lb|\sigma|:M(\sigma)\da=x\mathrm{\ in\ at\ most\ }t(|x|)\mathrm{\ many\ steps}\rb,$$
and we write $K^t$ for $K^t_{\mathbb{U}}$.
\end{defn}
Now we present the precise definition of Bennett's depth.
\begin{defn}
For $X\in 2^{\N}$, we say that $X$ is \emph{deep} if for every computable time bound $t$ and $c\in\N$, $$(\forall^{\infty}n)\lk K^t(X\upharpoonright n)-K(X\upharpoonright n)\geq c\rk.$$
A set that is not deep is called \emph{shallow}.
\end{defn}
The class of shallow sets is comeager in the Cantor space as we see later in Proposition \ref{comeager}. Therefore, it is reasonable to ask if generic sets are shallow. However, we shall show that there exists a deep 1-generic set in this paper. For a set $S$ of finite binary strings, a set $A$ \emph{meets} $S$ when there is an $n$ such that $A\upharpoonright n\in S$, and $A$ \emph{avoids} $S$ when there is an $n$ such that $\sigma\not\in S$ for all $\sigma$ extending $A\upharpoonright n$. A set $A$ is
\emph{$n$-generic} if it meets or avoids every $\Sigma^0_n$ set of strings. A set $A$ is \emph{weakly $n$-generic} if it meets all dense $\Sigma^0_n$ sets of strings. Every weakly ($n+1$)-generic set is
$n$-generic. For more about genericity, see Downey and Hirschfeldt \cite{downey}.
\begin{thm}
There exists a deep 1-generic set.
\end{thm}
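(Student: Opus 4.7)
The plan is to build $X$ by a finite-extension construction computable in $\emptyset'$, alternating between 1-genericity requirements and long ``depth blocks'' copied from a fixed Bennett-deep, $\emptyset'$-computable set $D$. Existence of such a $D$ (and, in fact, with a fast-growing depth function) follows from the classical theory of depth initiated in \cite{Deep}.

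Enumerate the $\Sigma^0_1$ sets of strings as $(W_e)_{e\in\N}$ and the total computable time bounds as $(t_e)_{e\in\N}$. Starting from $\sigma_0:=\varepsilon$, at stage $s$ I would do:
\begin{enumerate}
\item[(a)] Using $\emptyset'$, decide whether some $\tau\succeq\sigma_s$ lies in $W_s$; if so let $\rho_s$ be the first such $\tau$ found enumerating $W_s$, otherwise $\rho_s:=\sigma_s$.
\item[(b)] Find $N_s$ satisfying $K^{t_i}(D\uh N_s)-K(D\uh N_s)\ge \kappa_s$ for each $i\le s$, where $\kappa_s$ is an overhead bound to be specified in the analysis; such $N_s$ exists by the depth of $D$ and can be located by a $\Sigma^0_1$ search.
\item[(c)] Set $\sigma_{s+1}:=\rho_s\cdot(D\uh N_s)$.
\end{enumerate}
Let $X:=\bigcup_s\sigma_s$. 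That $X$ is 1-generic is immediate from (a): at stage $s=e$, $X$ either extends some $\tau\in W_e$ (met) or no extension of $\sigma_e$ lies in $W_e$ (avoided).

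To show $X$ is deep, I would fix a computable $t=t_i$ and $c\in\N$, and consider $n$ with $|\sigma_{s+1}|\le n<|\sigma_{s+2}|$ for $s$ large. The block $D\uh N_s$ lies at the predetermined position $[|\rho_s|,|\sigma_{s+1}|)$ inside $\sigma_{s+1}\prec X\uh n$. A fixed prefix-free machine that reads a program for $X\uh n$ and returns its substring at that position witnesses $K^{t'}(D\uh N_s)\le K^t(X\uh n)+O(\log|\sigma_{s+1}|)$ for a computable $t'$ derived from $t$; combined with the choice of $N_s$ in (b), this gives $K^t(X\uh n)\ge \kappa_s-O(\log|\sigma_{s+1}|)$. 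Meanwhile $X\uh n$ is $\emptyset'$-computable from $n$ alone, and a standard Chaitin-style estimate (using $K(\emptyset'\uh u)=O(\log u)$ for polynomially bounded oracle use) yields $K(X\uh n)=O(\log n)$. With $\kappa_s$ chosen to dominate the $O(\log n)$ overhead, the gap $\ge c$ for all such $n$ once $s\ge i$.

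The main obstacle is the calibration of $\kappa_s$. The $O(\log n)$ overhead involves $n\le|\sigma_{s+2}|$, which depends on the length $|\rho_{s+1}|$ of the \emph{next} genericity extension, a quantity not bounded by any computable function of $s$ in advance. The way out is to let $(N_s)$ grow fast enough that the depth chosen at stage $s$ eventually dominates $\log|\sigma_{s+2}|$: one concrete scheme is to enforce $N_{s+1}\ge 2^{|\rho_{s+1}|}$ at stage $s+1$ (still possible because the desired depth can be attained at arbitrarily large lengths) and to set $\kappa_s=\Theta(\log N_s)$. This forces $\log|\sigma_{s+2}|=O(\log N_{s+1})$, which in turn is controlled by $\kappa_{s+1}$, and the full analysis closes inductively provided the depth function of $D$ is chosen to grow sufficiently quickly relative to length. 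The careful execution of this balancing---interleaving the choice of $N_s$, $\kappa_s$, and the oracle use at each stage---is the principal bookkeeping step of the proof.
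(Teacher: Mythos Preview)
There is a real gap in the time-bound transfer. You claim $K^{t'}(D\upharpoonright N_s)\le K^t(X\upharpoonright n)+O(\log|\sigma_{s+1}|)$ for a computable $t'$ derived from $t$, but the extraction machine must first run the given $t$-bounded program for $X\upharpoonright n$, which already takes time $t(n)$; hence you need $t'(N_s)\ge t(n)$ up to computable overhead. For $n$ in the range $[|\sigma_{s+1}|,|\rho_{s+1}|)$---that is, inside the next genericity extension---$n$ can be arbitrarily large compared to $N_s$, since $|\rho_{s+1}|$ comes from an unbounded $\emptyset'$-search through $W_{s+1}$. Your calibration $N_{s+1}\ge 2^{|\rho_{s+1}|}$ only bounds such $n$ in terms of $N_{s+1}$, not $N_s$, and block $s+1$ does not yet appear in $X\upharpoonright n$ for these $n$. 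So no single computable $t'$ exists, and the depth of $D$ cannot be invoked precisely on the (possibly very long) genericity extensions---which is exactly where you need it. A secondary problem is the claim $K(X\upharpoonright n)=O(\log n)$: the oracle use of your construction is not polynomially bounded (the stage-$s$ query ``does $\sigma_s$ have an extension in $W_s$?'' has index encoding $\sigma_s$, hence exponential in $|\sigma_s|$), and locating $N_s$ requires evaluating $K(D\upharpoonright\cdot)$, so this bound needs a far more careful argument than the one you sketch.

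The paper sidesteps both issues by not importing depth from an external $D$ at all. Instead it directly chooses, in each interval $I_j$ assigned to a time bound $\varphi_i$, a string that is \emph{not} an output of $\mathbb{U}$ on short inputs within $g_M(g_N(\varphi_i(\max I_{j+2^{i+1}}+1)))$ steps---a specific, precomputed number tied to the relevant time bound at the relevant length. This makes the lower bound on $K^{\varphi_i}$ hold by construction, with no transfer of time bounds needed; and since the whole construction is computable, only $O(\log m)$ bits of non-computable advice (which intervals were finally moved in, which genericity nodes acted) suffice to recover $A\upharpoonright m$, giving the $K$ upper bound. The interaction between these interval choices and the genericity extensions, which may overwrite intervals, is exactly what forces the $\emptyset''$-priority tree; a pure finite-extension picture cannot accommodate it.
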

In \cite{TimeBound}, H\"{o}lzl, Kr\"{a}ling, and Merkle presented a method to show that there is a deep set of any high degree. Intuitively, computationally ``easy enough'' sets and sufficiently random sets cannot be deep. Indeed, computable and Martin-L\"of random sets are shown to be shallow by Bennett \cite{Deep}. But there is a computably enumerable deep set, i.e., the halting set. To explore the boundary of ``easiness'', Downey, McInerney, and Ng \cite{LowDeep} proved that there exists a superlow c.e.\ deep set. Their method is a variation on that of H\"{o}lzl et al.
\par
We modify this method to show that there is a deep 1-generic set. Unlike the previous proofs, we use a $\emptyset''$-priority construction to build a deep set while leaving room to meet the genericity requirements in our proof. See \cite{Tree} for an introduction to $\emptyset''$-priority constructions.
\par
Before we present the proof of the theorem, we show that our result is optimal in terms of genericity. This is first proved by Bienvenu, Delle Rose, and Merkle (unpublished). We provide a proof for reference. In the proof, we make use of a Solovay function.

\begin{defn}
    A function $f\colon\N\ra\N$ is a \emph{Solovay function} if $K(n)\leq^+ f(n)$ for all $n$ and $K(n)=^+f(n)$ infinitely often. Here, $a_0\leq^+a_1$ means $(\exists c\in\N)(\forall n)\lk a_0(n)\leq a_1(n)+c\rk$ for any functions $a_1,a_2$.
\end{defn}
\begin{prop}\label{comeager}
Every weakly 2-generic set is shallow.
\end{prop}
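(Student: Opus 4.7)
The plan is to combine a computable Solovay function $g$, which gives an upper bound on the time-bounded complexity $K^t$, with the weak 2-genericity of $X$ to force the prefix-free complexity $K$ of $X$'s initial segments to stay close to this upper bound infinitely often.

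First, I fix a computable Solovay function $g$. By the machine existence theorem applied to the weights $\{2^{-g(n)}\}_{n\in\N}$ (summable because $K(n)\leq^+ g(n)$), I obtain a computable prefix-free encoding $\gamma\colon\N\to 2^{<\N}$ with $|\gamma(n)|\leq g(n)+O(1)$, and a prefix-free machine sending $\gamma(n)\cdot\tau$ to $\tau$ (when $|\tau|=n$) that runs in some computable time bound $t$. This yields a constant $d$
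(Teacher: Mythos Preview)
Your approach has a real gap. The machine you describe, sending $\gamma(n)\cdot\tau \mapsto \tau$ for $|\tau|=n$, gives only the bound
\[
K^t(\tau)\le |\gamma(n)|+|\tau|+O(1)\le g(n)+n+O(1),
\]
which is essentially the trivial upper bound $K(\tau)\le K(|\tau|)+|\tau|+O(1)$ achieved in computable time via the Solovay function. To conclude shallowness you would then need, for infinitely many $n$, that $K(X\upharpoonright n)\ge g(n)+n-O(1)$; in other words, that $X\upharpoonright n$ is strongly $K$-random at those lengths. But the set of such strings is \emph{not} dense: for any constant $c$ and all sufficiently large $k$, every $\tau\succeq 0^k$ satisfies
\[
K(\tau)\le K(|\tau|)+2\log k+(|\tau|-k)+O(1)<|\tau|+K(|\tau|)-c,
\]
so no extension of $0^k$ lies in your target set. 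Weak $2$-genericity therefore cannot force $X$ into it, and the argument breaks down. (Your proposal is also cut off mid-sentence, but even the intended completion cannot succeed for this reason.)

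The paper's proof proceeds differently. Instead of bounding $K^t(\tau)$ for \emph{all} $\tau$ and then asking $K(\tau)$ to be large, it constructs a specific dense $\Delta^0_2$ set $V$ of strings $\tau_n=\sigma_n 0^{\langle n,s_n\rangle-|\sigma_n|}$, where $s_n$ is the (noncomputable) stage at which $K(n)$ stabilizes. The point is that $\tau_n$ can be produced in computable time from any $\mathbb U$-description $\delta$ of $n$: one records the running time $s$ of $\mathbb U(\delta)$ and pads $\sigma_n$ with zeros to length $\langle n,s\rangle$. This yields
\[
K^t(\tau_n)\leq^+ K(n)\leq^+ K(|\tau_n|)\leq^+ K(\tau_n),
\]
so $K^t-K$ is bounded on $V$ with no incompressibility hypothesis on $\tau_n$ at all. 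Weak $2$-genericity then ensures $X$ meets $V$ at arbitrarily large lengths. The key idea you are missing is to make the witnessing strings themselves highly compressible in a time-bounded way, rather than trying to force the generic set's initial segments to be incompressible.
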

\begin{proof}
Define a computable Solovay function $h$ as the following:
\begin{equation}
    h(\langle n,s\rangle)=
        \begin{cases}
            K_s(n) & \text{if $K_s(n)\not=K_{s-1}(n),$}\\
            +\infty & \text{otherwise}
        \end{cases}
\end{equation}
(here we use $+\infty$ for convenience; any coarse upper bound of $K(n)$ would do).
\par
Now we build $V\subseteq\lb0,1\rb^{\ast}$ using $\emptyset'$ such that 
\begin{equation}\label{1.2}
    (\forall\sigma)(\exists\tau\succeq\sigma)\lk\tau\in V\wedge K^t(\tau)\leq^+K(|\tau|)\rk,
\end{equation}
where $t$ is a suitable computable time bound.
\par 
Suppose $\sigma_n$ is the $n$th string in $\lb0,1\rb^*$. Using $\emptyset'$, we can compute the number of steps $s_n$ such that $K_{s_n}(n)=K(n)\not=K_{s_n-1}(n)$. Let $\tau_n=\sigma_n0^{\langle n,s_n\rangle-|\sigma_n|}$. A reasonable choice of the pairing function $\langle\cdot,\cdot\rangle$ ensures that $\tau_n$'s are all well-defined. There is a machine $M$ that outputs $\sigma_n0^{\langle n,s\rangle-|\sigma_n|}$ given input $\delta$ where $n=\mathbb{U}(\de)$ and $\mathbb{U}$'s computation takes exactly $s$ steps. We enumerate $\tau_n$ into $V$.
\par
Now we verify that $V$ satisfies \ref{1.2}. Notice that $|\tau_n|=\langle n,s_n\rangle$. By Remark \ref{g0} below, there exists a computable function $t$ such that $K^t(\tau_n)$, the complexity of $\tau_n$ within this time bound, is smaller than the complexity $K_M(\tau_n)$ within $\langle n,s_n\rangle$ many steps by a constant that does not depend on $n$. But $K_M(\tau_n)$ is no larger than $K_{s_n}(n)=h(\langle n,s_n\rangle)$. Therefore, the following holds:
\begin{equation}
    K^t(\tau_n)\leq^+h(\langle n,s_n\rangle)=K(n)\leq^+K(\langle n,s_n\rangle)=K(|\tau_n|)\leq^+ K(\tau_n).
\end{equation}
The second inequality holds because the existence of computable functions from $\langle a,b\rangle$ to each of its coordinates. 
\par Using $V$, we show that every weakly 2-generic set is shallow. Suppose $A$ is weakly 2-generic. Let $\lb V_n\rb_{n\in\N}$ be the subsets of $V$ such that $V_n$ only contains strings of length at least $n$. Each $V_n$ is a dense $\Delta^0_2$ set. Hence, $A$ meets each $V_n$, i.e. for each $n$, $A$ has an initial segment $\tau$ of length at least $n$ that is in $V_n$ and $K^t(\tau)\leq^+K(\tau)$. Therefore, there exists $t$ and $c$ such that $K^t(A\upharpoonright n)\leq K(A\upharpoonright n)+c$ infinitely often. So, $A$ is shallow.
\end{proof}
\begin{remark}\label{g0}
For any computable $t$ and prefix-free machine $M$, there exists a computable function $g_M$ and some constant $c$ such that $K^{g_M(t)}(x)\leq K^{t}_M(x)+c$. To see this, we utilize an algorithm from \cite[Definition 3.1.2]{LiVitanyi}. For any partial computable function $\varphi_e$, we define a machine $M_e$ that does the following when the input is $\sigma\in\lb0,1\rb^{\ast}$:
\begin{enumerate}
    \item Let $\tau_0$ be the empty string $\epsilon$. Let $\gamma_i$ be the $i$th string of $\lb0,1\rb^{\ast}$.
    \item In stage $s$, we consider substages. In substage $0$, $M_e$ computes $\varphi_{e,0}(\tau_s\gamma_0)$. $M_e$ does one more stage of computation of each $\varphi_e(\tau_s\gamma_i)$ for all $i\leq t$ in substage $t$. If $\varphi_e(\tau_s\gamma_{i_0})$ is the first to converge for some $i_0$, we go to step 3.
    \item If $\gamma_{i_0}$ is $\epsilon$, $M_e$ outputs $\varphi_e(\tau_s)$. Otherwise, read one more bit $b$ of $\sigma$ and let $\tau_{s+1}$ be $\tau_s^{\smallfrown}b$. Go to step 2.
\end{enumerate}
Now, given any prefix-free machine $M$, let $\phi_e$ be its corresponding prefix-free function. Then, $M_e$ is the self-delimiting machine obtained by the algorithm above. Suppose $M$ converges on some input $\sigma$ in $s$ steps. $M_e$ can simulate it in no more than $\sum\limits_{i=1}^{|\sigma|+1}\sum\limits_{j=1}^{2^i+s-2}j=O(4^{|\sigma|}+s2^{|\sigma|}+s^2(|\sigma|+1))$ steps of $\phi_e$ computations. Let $t$ be the time bound such that $M(\sigma)=x$ halts in $s=t(|\sigma|)$ steps. Notice that there is an overhead that is computable in $(s,|\sigma|)$ for $M_e$ to run the algorithm depending on the underlying machine model. If we assume that $s\geq|\sigma|$ and take the overhead into consideration, $\mathbb{U}$ can simulate it in $g_M(t(|\sigma|))$ steps for some computable function $g_M$ that depends on the index $e$ of $M$.
\par
The time bound of the simulation can be efficient if we restrict ourselves to self-delimiting machines as in \cite[Example 7.1.1]{LiVitanyi}. Specifically, it can be $et\log t$, where $e$ is the coding constant. However, the problem of whether polynomial time is enough given any prefix-free machine is still open. For the approach using the efficient simulation of self-delimiting machines, Juedes and Lutz \cite{poly} showed the negative result that an efficient simulation of any prefix-free function using a self-delimiting machine exists if and only if $\mathrm{P}=\mathrm{NP}$. For more information about the open question, see Li and Vitanyi \cite{LiVitanyi}.
\end{remark}
\section{The Proof of Theorem 1.5}
Let $\lb\varphi_i\rb_{i\in\N}$ be a listing of all partial computable functions. We partition $\N$ into consecutive intervals $I_0,I_1,\ldots$ where the interval $I_j$ has length $2^j$. We assign $\varphi_0$ to every second interval including the first one, $\varphi_1$ to every second interval including the first one of the remaining intervals, and so on for $\varphi_2,\varphi_3,\ldots.$ This way, $\varphi_i$ will be assigned to every $2^{i+1}$th interval. Therefore, if $\varphi_i$ is assigned to $I_j$, then $I_{j+2^{i+1}}$ is the least interval beyond $I_j$ to which $\varphi_i$ is also assigned. In order to make sure that the differences between the time-bounded complexity and the real complexity are large enough, we shall move in some of these intervals by tentatively defining our set $A$ on that interval. The precise definition of ``moving in'' is in the strategies below.
\par
We now construct a deep $1$-generic set $A$. At each stage $s$, we construct $A_s$ so that $A=\lim_{s\ra\infty}A_s$. Let $A_0=\emptyset$. For every $i,e\in\N$, we need to satisfy the following
\vspace{\baselineskip}
\par
\emph{Requirements:} 
\begin{itemize}
    \item [$D_i:$] If $\phi_i$ is an order function, i.e.\ a computable non-decreasing unbounded function from $\N$ to $\N$, then, $(\forall c)(\forall^{\infty}m)\lk K^{\varphi_i}(A\upharpoonright m+1)>K(A\upharpoonright m+1)+c\rk,$
    \item [$G_e:$] $A$ meets or avoids $S_e$, where $\lb S_e\rb_{e\in\N}$ is an effective listing of c.e.\ sets of strings.
\end{itemize}
\par
We order the requirements in the sequence $D_0<G_0<D_1<G_1<\cdot\cdot\cdot$ (of order type $\omega$). Let $\Lambda_0=\lb\infty<\cdot\cdot\cdot<w_n<\cdot\cdot\cdot<w_2<w_1<w_0\rb$ and $\Lambda_1=\lb s<w\rb$, where $s$ means stop and $w$ means wait. Let $\Lambda=\lb\infty<\cdot\cdot\cdot<w_n<\cdot\cdot\cdot<w_2<w_1<w_0<s<w\rb$ be the set of outcomes. Now, we can inductively define the \emph{tree of strategies} $T\subseteq\Lambda^{<\omega}$ by assigning to all strategies $\alpha$ of length $e$ the $e$th requirement in this list and by letting $\alpha^{\smallfrown}o$ be the immediate successors of $\alpha$ where
$o$ ranges over all possible outcomes of $\alpha$. In this tree $T$, we say that a node $\alpha$ is below (resp.\ above) another node $\beta$ if $\alpha\succ\beta$ (resp.\ $\alpha\prec\beta$). In other words, the priority tree is going downward. Also, we say a node $\alpha$ is to the left (resp.\ right) of another node $\beta$ if there are $\gamma\in T$ and $o_{\alpha}<_{\Lambda}o_{\beta}$ (resp.\ $o_{\alpha}>_{\Lambda}o_{\beta}$) such that $\gamma^{\smallfrown}o_{\alpha}\preceq\alpha$ and $\gamma^{\smallfrown}o_{\beta}\preceq\beta$.
\vspace{\baselineskip}
\par
\emph{Strategies}: Define $l_{\epsilon}=0$ where $\epsilon$ is the empty string.
For a $D_i$ node $\alpha$ at stage $s$, we have the following procedures:
\begin{enumerate}
    \item If node $\alpha$ is visited the first time, i.e.\ eligible to act for the first time, we initialize node $\alpha$ by setting a parameter $l_{\alpha}=\max\lb \max \lb l_{\sigma}:\sigma$ has been initialized$\rb,\max \lb l_{\sigma}':\sigma$ has outcome stop$\rb\rb+1$. 
    \item Set $x=0$.
    \item Wait for a number $y$ such that $\varphi_{i,s}(x)=y$.
    \item For the least interval $I_j$ assigned to $\varphi_i$ such that $I_j$ has not been moved in or was marked fresh (defined below), $x\geq\max I_{j+2^{i+1}}+1$, $\varphi_i(\max I_{j+2^{i+1}}+1)\lk s\rk$ converges, and $\min I_j$ is larger than $l_{\alpha}$, we run the universal prefix-free machine on all inputs of length equal to or smaller than $|I_j|-1$ for $g_M(g_N(\varphi_i(\max{I_{j+2^{i+1}}}+1)))$ many steps each, where $g_M,g_N$ are the computable functions as in Remark \ref{g0} and machines $M,N$ are defined in Lemma \ref{lowerbound} below. Then, we choose the leftmost string $\tau$ of length $|I_j|$ which was not among the outputs (there is at least one such string), and alter $A_{s-1}$ so that $A_s\upharpoonright \max I_j+1=(A_{s-1}\upharpoonright\min I_j)^{\smallfrown}\tau$ (we call this action \textquotedblleft moving in $I_j$\textquotedblright). For any moved-in interval $I$ assigned to some $\varphi_{i_0}$ and $\min I>\max I_j$, we mark it fresh. Whether or not we move in an interval, increase $x$ by 1 and go back to step 3.
\end{enumerate}
\par
\emph{True outcomes} of the $D_i$-strategy: Finitary outcomes correspond to the scenarios that the procedure waits at step 3 forever for some $x$ while $\infty$ corresponds to the scenario that $\varphi_i$ is total and step 4 is visited infinitely often.
\par
We let the \emph{current outcome} of the $D_i$-strategy at stage $s$ be $w_{x}$ if the $x$ remains unchanged during this stage. Otherwise, we let the current outcome be $\infty$. In this case, we move from step 3 to step 4 and move back to step 3. So, by the $\Pi_2$-Lemma, a true finitary outcome of a strategy is the current outcome of the strategy at cofinitely many stages whereas a true infinitary outcome of the strategy is the current outcome only at infinitely many stages. For the statement and the proof of the $\Pi_2$-Lemma of our construction, see Lemma \ref{pi2} below.
\vspace{\baselineskip}
\par
For a $G_e$ node $\alpha$: If node $\alpha$ is visited the first time, we initialize node $\alpha$ by assigning a number $c_{\alpha}$ to it and setting a large parameter $l_{\alpha}$. The number $c_{\alpha}$ equals the number of $G_e$ nodes initialized before $\alpha$. The parameter $l_{\alpha}$ is beyond any interval $I_{j+2^{i+1}}$ such that $I_j$ has been moved in and assigned to some $\varphi_i$ with $D_i$ having a finite current outcome and $i\leq e$, and larger than any $l_{\sigma}'$, and larger than $2^{2^{e+1}+2^{c_{\alpha}+1}}$. ($2^{2^{e+1}+2^{c_{\alpha}+1}}$ can be replaced by any large enough strictly increasing computable order function of $(e,c_{\alpha})$; we choose this function to make it easier to show that the real complexity of the initial segment of $A$ is small enough.) Recall that $\lb S_e\rb_{e\in\N}$ is an effective listing of c.e.\ sets of strings. We say that $\alpha$ can be satisfied through $n\geq l_{\alpha}$ at stage $s$ if $A_{s-1}\upharpoonright n$ has an extension $(A_{s-1}\upharpoonright n)^{\smallfrown}\sigma$ in $S_e\lk s\rk$, and to ensure that $G_e$ is not injured on the true path eventually, any interval $I$ assigned to some $\varphi_i$ such that $D_i<G_e$, the current outcome of the $D_i$-strategy is infinite, and $I$ overlaps the extension, has been moved in. We call $\sigma$ the concatenating segment. When there is such an $n$, we want to act by extending the initial segment of $A_{s-1}$ by using the concatenating segment $\sigma$ and declare $G_e$ to be satisfied and then compress the initial segments of $A$ to make sure that the difference between $K^{\varphi_i}(A\upharpoonright m+1)$ and $K(A\upharpoonright m+1)$ is at least $e+c_{\alpha}$ for $i\leq e$ and every $m$ affected by the moved-in $\phi_i$ intervals overlapped by $\sigma$. Therefore, we have to wait for a large enough $n$ such that the weight is small enough. The weight of the strings at time $\varphi_i$ is $$v_{\alpha,i}=\sum_{\theta\in N_{\alpha,i}}2^{-K^{\varphi_i}(\theta)},$$ and $$N_{\alpha,i}:=\lb\theta:\max{I_{j_0}}<|\theta|\leq\max{I_{j_1}}+1\rb,$$ where $I_{j_0}$ is the first interval $\sigma$ overlaps that is assigned to a $\varphi_{i}$ and $I_{j_1}$ is the least interval beyond $\sigma$ that is assigned to $\varphi_{i}$ (notice that we will compress not just the strings that are comparable to $A_{s-1}$ because $A\upharpoonright n$ might change in later stages). In order to compress each of these strings by $e+c_{\alpha}$ bits, we would need to enumerate the request $(K^{\varphi_i}(\theta)-e-c_{\alpha},\theta)$ for every $\theta\in N_{\alpha,i}$. Let $N_{\alpha}=\bigcup_iN_{\alpha,i}$. To make sure that we obtain a bounded request set $R$ with its weight no greater than $1$ eventually, we assign weight to each $N_{\alpha,i}$ set of strings for any $e$ and any $G_e$ node $\alpha$ in advance. For any $G_e$ node $\alpha$, we assign weight $w_{\alpha}$ such that $2^{e+1}\cdot w_{\alpha}= 2^{-e-1-2c_{\alpha}}$. Then, we can assign weight $w_{\alpha,i}$ such that $2^{i+1}\cdot w_{\alpha,i}=2^{-2e-2-2c_{\alpha}}$ to $N_{\alpha,i}$ to compress the strings corresponding to time $\varphi_i$. We want to wait until $v_{\alpha,i}\leq w_{\alpha,i}=2^{-2e-i-2c_{\alpha}-3}$ for every $i\leq e$ to act. However, for sufficiently large $n$, this will be true. This is because $\sum_{\theta}2^{-K^{\varphi_i}(\theta)}\leq 1$ for any $i$, and the tail sum approaches zero. If there are infinitely many $n$ through which $\alpha$ can be satisfied, there must exist a pair of large enough $n$ and a large enough stage so that $v_{\alpha,i}\leq 2^{-2e-i-2c_{\alpha}-3}$ for all $i\leq e$, and the relevant overlapping intervals have been moved in. In that case, we can let $A_s=(A_{s-1}\upharpoonright n)^{\smallfrown}\sigma$ as desired. So, $\alpha$ waits for an $n$ that makes sure we can compress the strings cheaply enough as described above. If such an $n$ is found, we act by extending $A_{s-1}\upharpoonright n$ and define $l_{\alpha}'=n+|\sigma|$ as well. Otherwise, let $A_{s} = A_{s-1}$.
\par
\emph{True outcomes} of the $G_e$-strategy: $w$ when $A$ avoids $S_e$; $s$ when $A$ meets $S_e$.
\par
We let the \emph{current outcome} of a $G_e$-strategy $\alpha$ be $w$ when it is still waiting for a pair of large enough stage and $n$ to act or $\alpha$ cannot be satisfied through any $n$. We let the current outcome be $s$ after it has acted by extending an initial segment of $A$ at a suitable stage. 
\vspace{\baselineskip}
\par
\emph{Construction}: 
Let a strategy $\alpha$ of length $t$ be eligible to act at a substage $t$ (we also say node $\alpha$ is visited) of stage $s\geq t$ if and only if $\alpha$ has the correct guess about the current outcomes of all $\beta\prec\alpha$. The strategy $\alpha$ will then act according to the above description. We define the current true path $f_s$ at stage $s$ to be the longest strategy eligible to act at stage $s$.
\vspace{\baselineskip}
\par
\emph{Verification}: Let $f=\liminf_s f_s$. First, we prove some combinatorial facts about our $\Pi_2$-arguments and strategies. This is what Lempp \cite{Tree} called a $\Pi_2$-Lemma.
\begin{lem}\label{pi2}
The following are true:
    \begin{enumerate}
        \item A true finitary outcome of a $D_i$-strategy or a true outcome $w$ or $s$ of a $G_e$-strategy is the current outcome at cofinitely many stages, and this outcome, once current, must be current from then on.
        \item A true infinitary outcome of a $D_i$-strategy is the current outcome at infinitely many stages. In the case of a true infinitary outcome, any current finitary outcome, once no longer current, can never be current again.
        \item If $\alpha\preceq f_{s_0}$ (for some stage $s_0$) and $\alpha\prec f$, then $\alpha$ is to the left of or above $f_s$ for all stages $s\geq s_0$
    \end{enumerate}
\end{lem}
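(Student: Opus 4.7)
The plan is to prove the three statements by simultaneous induction on the depth $|\alpha|$ of the node, exploiting the constrained dynamics of current outcomes at each type of strategy.

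For part (1), I would check each finitary outcome case. For a $D_i$-strategy with true outcome $w_{x_0}$, the function $\varphi_i$ diverges at $x_0$, so once the parameter $x$ reaches $x_0$ (in finitely many visits), $x$ can never change again and the current outcome $w_{x_0}$ is maintained thereafter. For a $G_e$-strategy with true outcome $w$, the strategy never finds an admissible $n$ and $w$ is current at every visit; for true outcome $s$, once the strategy acts its current outcome becomes $s$ and is never changed back. In all three cases the true finitary outcome eventually becomes current and, once so, remains current.

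For part (2), if a $D_i$-strategy has true outcome $\infty$, then $\varphi_i$ is total and, by (3) applied inductively to its ancestors, the strategy itself is visited at enough stages for $x$ to grow without bound; each stage at which $x$ strictly increases registers the current outcome $\infty$, giving the first sentence. For the second sentence, a finitary current outcome $w_{x'}$ reflects the momentary value $x = x'$, so once $x$ exceeds $x'$ the only possible current outcomes are $\infty$ (at increase stages) and $w_{x''}$ with $x'' \geq x > x'$; hence $w_{x'}$ can never become current again.

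For part (3), fix $\alpha \preceq f_{s_0}$ with $\alpha \prec f$ and any $s \geq s_0$ such that $\alpha \not\preceq f_s$. Let $\gamma \prec \alpha$ be the least ancestor whose current outcome $o_s$ at stage $s$ differs from the outcome $o_\alpha$ dictated by $\alpha$; by minimality $\gamma$ is visited at stage $s$, so $o_s$ is well-defined. I claim $o_s >_{\Lambda} o_\alpha$, which is exactly the definition of $\alpha$ being to the left of $f_s$. If $o_\alpha$ were a finitary outcome of $\gamma$, it was current at $s_0$, so by part (1) for $\gamma$ (available from the inductive hypothesis) it would still be current at stage $s$, contradicting $o_s \neq o_\alpha$. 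Therefore $o_\alpha = \infty$ for a $D_i$-strategy $\gamma$, and since $\infty$ is the $\Lambda$-minimum, any $o_s \neq \infty$ is strictly larger, giving the claim.

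The main obstacle is arranging the induction so that the three statements inter-support each other: (2) for $\gamma$ needs (3) at the ancestors of $\gamma$ to confirm that $\gamma$ is visited frequently enough, while (3) for $\alpha$ needs (1) for $\gamma$. Once the induction on $|\alpha|$ is organized with the full strength of all three statements as the hypothesis at smaller depths, each case reduces to the direct outcome-dynamics observations sketched above.
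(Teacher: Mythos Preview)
Your proposal is correct and follows essentially the same approach as the paper: parts (1) and (2) are read off directly from the strategy descriptions, and part (3) is a case analysis on the outcome $o_\alpha$ at the first point of divergence, using (1) to rule out the finitary cases and the minimality of $\infty$ in $\Lambda$ for the remaining case. Your simultaneous-induction packaging is more elaborate than what the paper actually needs (the paper proves (1) and (2) outright with no inductive appeal, and then uses the already-established (1) in the proof of (3)), but it does no harm and the underlying case analysis is identical.
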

\begin{proof}
    (1) and (2) are clear by the description of the $D_i$, $G_e$-strategies above.
    \par
    For (3), suppose a strategy with current outcome $o$, where $\beta^{\smallfrown}o\preceq\alpha$ for some $\beta$, changes after stage $s_0$. If $o$ is $w$ or $w_i$ for some $i$, then by (1), $\alpha$ can never be on the current true path again. On the other hand, if $o$ is $\infty$ or $s$, then this is clear by our ordering of $\Lambda$ and description of outcomes $w$ and $s$ above.
\end{proof}
\par
Notice that the lemma implies that $f$ is the true path. Now we claim that every strategy $\alpha\prec f$ ensures the satisfaction of its requirement.
\par
First, we show that $A$ exists.
\begin{lem}\label{interval}
Any interval assigned to some $\varphi_i$ would not be moved in infinitely often.
\end{lem}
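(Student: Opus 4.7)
The plan is a strong induction on the index $k$ of the interval $I_k$. The key structural observation is that only step~(4) of a $D_i$-strategy can move in or mark fresh an interval: a $G_e$-strategy merely extends $A_{s-1}$ past intervals that the construction has already verified to be moved in, and the precondition on $G_e$-actions ensures it never needs to trigger a new moving-in. Moreover, step~(4) permits moving in $I_k$ only when $I_k$ has never been moved in or is currently marked fresh, and $I_k$ is marked fresh only as a side effect of some earlier interval $I_j$ (one with $\max I_j<\min I_k$, equivalently $j<k$) being moved in.

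For the base case $k=0$, no interval $I_j$ satisfies $\max I_j<\min I_0$, so $I_0$ is never marked fresh, and hence is moved in at most once (via the ``has not been moved in'' clause). For the inductive step, assume each of $I_0,\ldots,I_{k-1}$ is moved in only finitely often. Then the total number of times $I_k$ can be marked fresh is bounded by the total number of movings-in of $I_0,\ldots,I_{k-1}$, which is finite by the inductive hypothesis. Since every moving-in of $I_k$ after the first consumes a fresh mark on $I_k$, $I_k$ can be moved in only finitely often.

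The one subtlety to check is that a fresh mark on $I_k$ is indeed consumed by the next moving-in of $I_k$; otherwise a single fresh-marking would license arbitrarily many subsequent movings-in and the lemma could fail. This is the intended reading of the condition ``$I_j$ has not been moved in or was marked fresh'' in step~(4): after $I_k$ is moved in the fresh status is cleared, so any further moving-in of $I_k$ requires a fresh-marking in between, which in turn requires a further moving-in of some $I_j$ with $j<k$. With this bookkeeping point verified, the induction closes and the lemma follows.
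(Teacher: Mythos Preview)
Your proof is correct and follows exactly the same inductive argument as the paper: induction on the interval index, using that $I_k$ can only be marked fresh when some earlier $I_j$ is moved in, so finitely many movings-in of $I_0,\dots,I_{k-1}$ force finitely many fresh-markings (hence movings-in) of $I_k$. Your version is simply more explicit about the bookkeeping (that only $D_i$-strategies move in intervals and that a fresh mark is consumed upon moving in), points the paper leaves implicit.
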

\begin{proof}
We prove this by induction. The case of the first interval $\lb 0\rb$ is trivial since no $D_i$ nodes can mark it fresh. Suppose every interval before $I$ would not be moved in infinitely often. Then, $I$ could only be marked fresh finitely many times by the induction hypothesis. Therefore, it could only be moved in finitely many times.
\end{proof}
 
\begin{lem}\label{Delta2}
$\lim_{s\ra\infty}A_s(x)=A(x)$. 
\end{lem}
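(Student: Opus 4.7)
The plan is to fix $x\in\N$ and verify that $A_s(x)$ is modified at only finitely many stages $s$, so that $A(x)=\lim_s A_s(x)$ is well defined. Inspection of the construction shows that $A_s(x)\neq A_{s-1}(x)$ can occur only in one of two ways: either (a) some $D_i$-strategy moves in the unique interval $I_j$ containing $x$, or (b) some $G_e$-strategy $\alpha$ acts at stage $s$ by extending $A_{s-1}\upharpoonright n$ with a concatenating segment $\sigma$, where necessarily $n\leq x<n+|\sigma|$.

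For case (a), any action that changes $A_s(x)$ must move in the specific interval $I_j$ containing $x$. Lemma \ref{interval} already guarantees that each fixed interval is moved in only finitely often, so case (a) contributes only finitely many changes.

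For case (b), I would exploit the fact that the extension point satisfies $n\geq l_\alpha$, so $\alpha$ can affect position $x$ only if $l_\alpha\leq x$. The initialization of a $G_e$-node forces $l_\alpha>2^{2^{e+1}+2^{c_\alpha+1}}$, and hence $l_\alpha\leq x$ constrains both $e$ and $c_\alpha$ to lie below explicit functions of $x$. Since distinct $G_e$-nodes of the same requirement receive strictly increasing values of $c_\alpha$ (each $c_\alpha$ counts the previously initialized $G_e$-nodes), only finitely many nodes $\alpha$, summed over all $e$, can ever satisfy $l_\alpha\leq x$. Moreover, each such $\alpha$ can act at most once: after it acts, its current outcome becomes $s$, which by Lemma \ref{pi2}(1) is sticky once attained, so $\alpha$ does not extend again.

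Putting the two cases together yields a stage $s^*$ beyond which $A_s(x)$ never changes, so $\lim_{s\ra\infty}A_s(x)$ exists and defines $A(x)$. I do not anticipate a serious obstacle: the quantitative lower bound imposed on $l_\alpha$ at initialization was engineered precisely to localize the effect of $G_e$-actions away from previously fixed positions, so the whole verification reduces to unpacking that inequality, invoking Lemma \ref{interval}, and using the stickiness of the outcome $s$ from Lemma \ref{pi2}(1).
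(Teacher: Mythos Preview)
Your proposal is correct and follows essentially the same route as the paper: split changes to $A_s(x)$ into those caused by moving in the interval containing $x$ (handled by Lemma~\ref{interval}) and those caused by $G_e$-extensions, then use the lower bound $l_\alpha>2^{2^{e+1}+2^{c_\alpha+1}}$ to bound both $e$ and $c_\alpha$, together with the fact that each $G_e$-node acts at most once. The only cosmetic difference is that you cite Lemma~\ref{pi2}(1) for the ``acts at most once'' fact, whereas the paper takes it as immediate from the description of the $G_e$-strategy; also, your clause ``necessarily $n\leq x<n+|\sigma|$'' is stronger than needed (depending on how one reads the assignment $A_s=(A_{s-1}\upharpoonright n)^{\smallfrown}\sigma$, bits beyond $n+|\sigma|$ might also be reset), but the argument only uses $l_\alpha\leq n\leq x$, which holds regardless.
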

\begin{proof}
For any $x$, $x$ belongs to some interval $I$ assigned to some $\varphi_i$. By the previous lemma, $I$ would not be moved in infinitely often. Also, because $l_{\alpha}$ for a $G_e$ node is larger than $2^{2^{e+1}}$, nodes corresponding to only finitely many $G_e$ could do extensions involving $x$. For each $e$, since $l_{\alpha}$ is larger than $2^{2^{c_{\alpha}+1}}$ for a $G_e$ node $\alpha$, only finitely many such $\alpha$ can make extensions involving $x$, and each $\alpha$ could only act once. Therefore, $x$ could only be in the extensions of finitely many $G_e$ nodes. There must be a stage where all these extensions are completed. Therefore, after the last time $I$ is moved in and the completion of all such extensions in some stage $s$, $A_{s_0}(x)$ will be the same for any $s_0\geq s$.
\end{proof}
So, this is a $\Delta^0_2$-construction by Shoenfield limit lemma since $A_s(x)$ is computable. Now, we distinguish cases for $\alpha$.
\vspace{\baselineskip}
\par
Case 1: $\alpha$ is a $G_e$-strategy node. 
\begin{lem}\label{injury}
The $G_e$ node $\alpha$ on the eventual true path is not injured.
\end{lem}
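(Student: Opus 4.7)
The plan is to fix the stage $s_0$ at which $\alpha$ acts (if it ever does), say by extending $A_{s_0-1}\upharpoonright n$ with a concatenating segment $\sigma$ and setting $l_\alpha'=n+|\sigma|$, and then to show that no later action modifies any bit of $A$ in $[0,n+|\sigma|)$. This yields $A\upharpoonright(n+|\sigma|)=(A_{s_0-1}\upharpoonright n)^{\smallfrown}\sigma\in S_e$, so that $A$ meets $S_e$ and $\alpha$ is not injured. If $\alpha$'s true outcome is $w$ then $\alpha$ never acts and the claim is vacuous, so assume the true outcome is $s$.

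I will case-split on which nodes $\gamma$ can be visited at some stage $s>s_0$. By Lemma~\ref{pi2}(3), every such $\gamma$ is a prefix of $f_s$, and $f_s$ lies below or to the right of $\alpha$, so $\gamma$ is either an ancestor of $\alpha$, lies below $\alpha^{\smallfrown}s$, or is a node to the right of $\alpha$. Among ancestors, each $G_f$-node either already made its unique action before $s_0$ or never acts, and each $D_i$-ancestor with finite true outcome has its current outcome stabilized by Lemma~\ref{pi2}(1) before $s_0$, so none of these act at $s\geq s_0$. Descendants below $\alpha^{\smallfrown}w$ are abandoned once $\alpha$ switches to outcome $s$, while descendants below $\alpha^{\smallfrown}s$ and any node to the right of $\alpha$ first visited at a stage $\geq s_0$ are initialized with $l$-parameter strictly greater than $l_\alpha'=n+|\sigma|$, and so act only at positions $\geq n+|\sigma|$. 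Finally, any node to the right of $\alpha$ initialized before $s_0$ cannot be visited again at $s\geq s_0$: Lemma~\ref{pi2}(1) blocks this if the relevant ancestor has a finite true outcome matching $\alpha$'s branch, and Lemma~\ref{pi2}(2) does so if that outcome is $\infty$, because the competing $w_x$ never recurs.

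The main obstacle is the remaining case, where $\gamma$ is a $D_i$-ancestor of $\alpha$ with true outcome $\infty$ (necessarily $i\leq e$); such a $\gamma$ returns to step 4 infinitely often and could, a priori, move in a new interval far to the left of $n$ and trigger a fresh-marking cascade that reopens intervals overlapping the extension. This is prevented by the acting condition for $\alpha$ at $s_0$: every $\varphi_i$-interval $I$ overlapping the extension has already been moved in and is not fresh at stage $s_0$. I will prove by induction on $s\geq s_0$ that every $\varphi_i$-interval $I$ with $\min I\leq n+|\sigma|-1$, assigned to such a $\varphi_i$, remains moved in and not fresh; since the other categories never act on positions $<n+|\sigma|$, the leftmost $\varphi_i$-interval satisfying $\gamma$'s step-4 selection conditions at stage $s$ must have $\min\geq n+|\sigma|$, so the move-in happens beyond $n+|\sigma|$ and the resulting fresh-marking only affects intervals with minimum strictly greater than the chosen interval's maximum. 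Combining all the cases gives $A\upharpoonright(n+|\sigma|)=A_{s_0}\upharpoonright(n+|\sigma|)$, so $\alpha$ witnesses that $A$ meets $S_e$ and is not injured.
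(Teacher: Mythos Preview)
Your proof is correct and follows the same approach as the paper, which organizes the identical case analysis as ``threats from above, left, right, and below''; your explicit induction on stages $s\ge s_0$ for the $D_i$-ancestors with true outcome $\infty$ is in fact more careful than the paper's one-line treatment of that case. One minor point to tighten: your inductive hypothesis claims every $\varphi_i$-interval $I$ with $\min I\le n+|\sigma|-1$ is moved in and not fresh, but the acting condition you cite only guarantees this for intervals overlapping $\sigma$; for the base case you should also observe that any such interval with $l_\gamma<\min I<n$ is moved in and not fresh at $s_0$, which follows since $\gamma$'s strategy proceeds left to right and fresh-marking is upward-closed (were such an $I$ fresh at $s_0$, the intervals overlapping $\sigma$ would be fresh as well, contradicting the acting condition).
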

\begin{proof}
There are threats from above, left, right, and below that could injure $G_e$. For threats from above, a $D_i$ node $\beta\prec\alpha$ with $i\leq e$ might injure $\alpha$ at later stages after $\alpha$ acts. But $\alpha$ only extends $A_{s-1}\upharpoonright n$ to $(A_{s-1}\upharpoonright n)^{\smallfrown}\sigma$ at stage $s$ when any interval assigned to $\varphi_i$ overlapping $\sigma$ has been moved in by $\beta$ with an infinite current outcome and will not be marked fresh again. The reason for the latter is: higher priority $D_i$'s nodes with infinite current outcomes cannot mark such intervals fresh anymore because $\alpha$ could only be satisfied when all of them have been moved in, and nodes with finite outcomes on the true path cannot mark any such interval fresh after stage $s$ because they cannot act anymore. For threats from the left, nodes to the left of $\alpha$ cannot injure $\alpha$ after its action because such nodes will not be visited by the third part of the $\Pi_2$-Lemma \ref{pi2}. For the threats from the right or below, that is, a higher priority $D_i$ node might have a wrong current outcome at some stage and a correct outcome at a later stage by correcting its guess about whether $\varphi_i$ is total or not, or a node $\beta\succ\alpha$ might injure $\alpha$ at or after the stage $\alpha$ acts. By the $\Pi_2$-lemma, these nodes are visited for the first time after $\alpha$ acts and will be initialized. Notice that $l_{\beta}$ for any newly initialized node $\beta$ is larger than $l_{\alpha}'$, and any $\sigma$'s in new extensions or intervals moved in are beyond the extension in $\alpha$'s action.
\end{proof}
\begin{lem}
A is 1-generic.
\end{lem}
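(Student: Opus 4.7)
The plan is to verify each $G_e$-requirement on the true path $f$, which suffices because $\lb S_e\rb_{e\in\N}$ lists all c.e.\ sets of finite binary strings. Let $\alpha\prec f$ be the $G_e$-node on the true path and split on its true outcome. If the true outcome is $s$, then at some stage $s_0$, $\alpha$ acted by extending $A_{s_0-1}\uh n$ by a concatenating segment $\sigma\in S_e$; by Lemma \ref{injury} this action is never injured, so $A$ has an initial segment in $S_e$, giving that $A$ meets $S_e$.

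The substantive case is true outcome $w$. I would argue by contradiction: assume every initial segment of $A$ has an extension in $S_e$, and produce a stage at which $\alpha$ would in fact be forced to act. First, because $\sum_\theta 2^{-K^{\varphi_i}(\theta)}\le 1$ for each $i$, its tail over strings of length exceeding any fixed bound tends to $0$; hence for $n$ sufficiently large, the weight condition $v_{\alpha,i}\le 2^{-2e-i-2c_\alpha-3}$ is forced for every $i\le e$, uniformly in any candidate concatenating segment extending past position $n$. Fix such an $n\ge l_\alpha$. By Lemma \ref{Delta2}, there is a stage $s_1$ past which $A_{s-1}\uh n = A\uh n$, and by hypothesis $A\uh n$ has an extension $(A\uh n)^{\smallfrown}\sigma\in S_e$, which appears in $S_e\lk s\rk$ from some stage $s_2$ on.

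The delicate point is showing the moved-in condition stabilizes favorably. The finite $\sigma$ overlaps only finitely many intervals, and for each $D_i$-node $\beta\prec\alpha$ with $\beta^{\smallfrown}\infty\preceq\alpha$, $\varphi_i$ is total and step~4 of $\beta$ is visited unboundedly often with $x$ growing to infinity. For every relevant interval $I_j$ assigned to $\varphi_i$, overlapping $\sigma$, and well past $l_\beta$, the triggering conditions for $\beta$ (large $x$, convergence of $\varphi_i(\max I_{j+2^{i+1}}+1)$) eventually all hold. Combined with Lemma \ref{interval} (each interval is moved in only finitely often and so marked fresh only finitely often), the state of such $I_j$ must stabilize; were it to stabilize in the ``not moved in or fresh'' state, then once all earlier intervals assigned to $\varphi_i$ had stabilized as ``moved in, not fresh'', $\beta$ would be forced to move in $I_j$ once more, contradicting Lemma \ref{interval}. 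Hence $I_j$ stabilizes in the moved-in configuration from some stage $s_3$ on. Since $\alpha\prec f$, $\alpha$ is eligible at infinitely many stages; pick any eligible $s\ge\max(s_1,s_2,s_3)$. At such a stage all four conditions for $\alpha$ to be satisfiable through $n$ hold simultaneously, so $\alpha$ would act, contradicting outcome $w$. Therefore $A$ avoids $S_e$.

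The main obstacle is the coordination above, in particular the non-trivial claim that every ``infinitary'' $D_i$-strategy must eventually settle each of its relevant assigned intervals into the moved-in state; this uses both Lemma \ref{interval} and the priority rule embedded in step~4. The remaining parts of the argument are standard alignment of stabilizing quantities from Lemmas \ref{interval} and \ref{Delta2} via the $\Pi_2$-Lemma \ref{pi2}.
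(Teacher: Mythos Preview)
Your proposal is correct and follows essentially the same approach as the paper: split on whether the $G_e$-node $\alpha$ on the true path ever acts, invoke Lemma~\ref{injury} in the ``meets'' case, and in the ``avoids'' case argue that if every initial segment of $A$ had an extension in $S_e$ then the weight and moved-in conditions would eventually be met, forcing $\alpha$ to act. The only structural differences are that you appeal to Lemma~\ref{Delta2} for stabilization of $A\uh n$ where the paper frames things as an induction on $e$, and you spell out explicitly (via Lemma~\ref{interval} and an implicit induction on interval position) why the infinite-outcome $D_i$-nodes above $\alpha$ must eventually have moved in every relevant interval---a point the paper handles more tersely by deferring to the discussion in the $G_e$-strategy description.
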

\begin{proof}
Assume by induction that there is a stage $s$ after which we never act for any $G_{e'}$ node on the true path with $e' < e$. If we ever act for the $G_e$ node $\alpha$ on the true path as described in the $G_e$-strategy, we permanently satisfy requirement $G_e$ by ensuring that $A$ meets $S_e$ by the lemma above. In that case, $G_e$-strategy never acts again, so the induction can proceed. If not, there are only finitely many $n$ such that $A_{t-1}\upharpoonright n$ has an extension in $S_e[t]$ for all $t>s$ and no such $n$ enables the action of the $\alpha$. No sufficiently long initial segment of $A$ has an extension in $S_e$, so $A$ avoids $S_e$. The induction can proceed as well since $\alpha$ will not act at any stage.
\end{proof} 
\vspace{\baselineskip}
\par
Case 2: $\alpha$ is a $D_i$-strategy.
\begin{lem}\label{lowerbound}
Suppose that $\varphi_i$ is an order function. There are constants $c_i$ and $d_i$ such that the following holds. Suppose $\varphi_i$ is assigned to the interval $I_j$ and we move in $I_j$ at stage $s$ the last time without being marked fresh again. Then if $j$ is sufficiently large, no $G_e$ node makes an extension that overlaps $I_j$ after stage $s$, and $m$ is such that $\max I_j<m\leq \max I_{j+2^{i+1}}$, then
$K^{\varphi_i}(A\upharpoonright m+1)\geq c_im-d_i$.
\end{lem}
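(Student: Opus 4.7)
The plan is to exploit the fact that the block $\tau$ written into $I_j$ at stage $s$ was explicitly engineered to be hard to compute within the time bound $T:= g_M(g_N(\varphi_i(\max I_{j+2^{i+1}}+1)))$ used in the construction. By choice of $\tau$ as the leftmost string of length $|I_j|$ not produced by $\mathbb{U}$ on any input of length $\leq |I_j|-1$ within $T$ steps, we get $K^T(\tau)\geq |I_j|=2^j$. Moreover, since $I_j$ is never marked fresh after stage $s$ and no $G_e$-extension overlaps $I_j$ after $s$, the bits of $A$ on $I_j$ remain equal to $\tau$ forever. Thus for any $m$ with $\max I_j<m\leq \max I_{j+2^{i+1}}$, the initial segment $A\upharpoonright m+1$ contains $\tau$ as the known substring at positions $[\min I_j,\max I_j]$.

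The key step is to turn a $\varphi_i$-short description of $A\upharpoonright m+1$ into a short description of $\tau$. I would define the prefix-free machine $N=N_i$ as follows: on input $\sigma$, simulate $\mathbb{U}(\sigma)$, and if the output $y$ has length $m+1$ with $\max I_j<m\leq \max I_{j+2^{i+1}}$ for the unique $j$ determined by $m$ and the fixed $i$, output the substring of $y$ at positions $[\min I_j,\max I_j]$. A witness $\sigma$ for $K^{\varphi_i}(A\upharpoonright m+1)$ then makes $N(\sigma)=\tau$ in time at most $\varphi_i(m+1)+O(m)\leq \varphi_i(\max I_{j+2^{i+1}}+1)+O(2^j)$, using monotonicity of the order function $\varphi_i$ and the relation $m=O(2^j)$ with an implicit constant depending only on $i$. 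Choosing a computable $t$ with $t(|\tau|)$ bounding this running time, we get $K^t_N(\tau)\leq K^{\varphi_i}(A\upharpoonright m+1)$, and Remark \ref{g0} applied to $N$ yields $K^{g_N(t)}(\tau)\leq K^{\varphi_i}(A\upharpoonright m+1)+c_N$ for a coding constant $c_N$.

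To line this bound up with the actual time bound $T$ used when selecting $\tau$, I would introduce the second prefix-free machine $M$ so that the function $g_M$ from Remark \ref{g0} satisfies $g_M(g_N(\varphi_i(\max I_{j+2^{i+1}}+1)))\geq g_N(t(|\tau|))$ for all sufficiently large $j$; this is the purpose of the double nesting $g_M\circ g_N$ in the construction's time bound. Combining $|I_j|\leq K^T(\tau)\leq K^{g_N(t)}(\tau)\leq K^{\varphi_i}(A\upharpoonright m+1)+c_N$ with $|I_j|=2^j\geq 2^{-2^{i+1}-1}m$ (from $m\leq \max I_{j+2^{i+1}}\leq 2^{j+2^{i+1}+1}$) gives the desired bound $K^{\varphi_i}(A\upharpoonright m+1)\geq c_im-d_i$ with $c_i:=2^{-2^{i+1}-1}$ and $d_i:=c_N$.

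The hard part will be the careful matching of time bounds. The natural bound for the extraction machine $N$ is $\varphi_i(\max I_{j+2^{i+1}}+1)+O(2^j)$, while the construction commits to the tighter-looking expression $g_M(g_N(\varphi_i(\max I_{j+2^{i+1}}+1)))$ with no explicit additive slack. The whole point of introducing $M$ (rather than only $N$) is to supply that slack through Remark \ref{g0}: one must verify that some prefix-free machine $M$ has a $g_M$ dominating the fixed computable function that converts $g_N(y)$ to $g_N(y+O(2^j))$ along the relevant sequence of $j$. Everything else is essentially bookkeeping: stability of $\tau$ in $A$ from the hypotheses on $I_j$ and $G_e$, the design of a prefix-free extraction machine depending on the fixed index $i$, and the geometric comparison of $|I_j|$ with $m$.
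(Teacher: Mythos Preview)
Your approach is correct in spirit and close to the paper's, but you have misidentified where the difficulty lies and this leaves the role of $M$ muddled. The paper factors the reduction into two clean steps rather than one: the machine $N=N_i$ truncates $A\upharpoonright m+1$ to $A\upharpoonright \max I_j+1$ (using only that $\varphi_i(m+1)\le b:=\varphi_i(\max I_{j+2^{i+1}}+1)$ by monotonicity), and the machine $M$ then strips off the prefix $A\upharpoonright\min I_j$ to recover $\tau$. Chaining the two simulations via Remark~\ref{g0} gives directly
\[
K_{g_M(g_N(b))}(\tau)\ \le\ K_b(A\upharpoonright m+1)+d_M+d_N\ \le\ K^{\varphi_i}(A\upharpoonright m+1)+d_M+d_N,
\]
which is then compared against the built-in lower bound $K_{g_M(g_N(b))}(\tau)\ge |I_j|$. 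No ``additive slack'' is needed: in the paper's reading of Remark~\ref{g0}, $g_N(t)$ already absorbs $N$'s own post-processing once the internal $\mathbb{U}$-call has run for time $t$, and the overhead of truncation is $O(|\gamma|)=O(t)$ because $\mathbb{U}$ cannot write $|\gamma|$ output bits in fewer than $|\gamma|$ steps.

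Your single extraction machine works under the same convention, but then $M$ is redundant (any machine with $g_M\ge\mathrm{id}$ would do), and your proposed job for $M$---to dominate a function ``converting $g_N(y)$ to $g_N(y+O(2^j))$''---is both unnecessary and ill-posed, since the $O(2^j)$ term is already bounded by $t\le b$ and hence swallowed by $g_N$. The paper's two-machine decomposition makes the composition $g_M\circ g_N$ in the construction's time bound exactly match the two reduction steps, so nothing needs to be verified after the fact; your version is not wrong, but you should drop the slack discussion and either state that $M$ can be taken arbitrarily, or adopt the paper's two-step split so that both $g_M$ and $g_N$ earn their place.
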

\begin{proof}
We first define a machine $M$. On input $\rho$, run the optimal prefix-free machine $\mathbb{U}$ on $\rho$. If $\mathbb{U}$ outputs $\gamma$ and $\gamma$ has length $\max I_j+1$ for some $j$, let $M$ output the string $\delta$ such that $(\gamma\upharpoonright \min I_j)^{\smallfrown}\delta=\gamma$. By Remark \ref{g0}, $\mathbb{U}$ can simulate the machine $M$ on input $\rho$ in time $g_M(g_N(t))$, for some computable function $g_M$, if $\mathbb{U}$ runs in time $g_N(t)$ when $M$ outputs $\gamma$. Let $d_M$ be the coding constant for the machine $M$. Let $b =\varphi_i(\max I_{j+2^{i+1}}+1)$. 
\par
We claim that if $j$ is large enough, then $K_{g_N(b)}(A\upharpoonright \max I_j+1) >\frac{\max I_j}{3}$. We can let $j$ be large enough such that $\frac{\max I_j}{3}+d_M\leq |I_j|-1$. Let $s$ be the stage that we move in $I_j$ the last time without being marked fresh again. Suppose $\tau$ is the string such that $A_s\upharpoonright \max I_j+1=(A_{s-1}\upharpoonright \min I_j)^{\smallfrown}\tau$ at stage $s$. Then, we have $K_{g_M(g_N(b))}(\tau)\geq |I_j|$ because $\tau$ is not an output of the optimal prefix-free machine with input length smaller than $|I_j|$ and time $g_M(g_N(b))$. Suppose that $K_{g_N(b)}(A\upharpoonright \max I_j+1) \leq \frac{\max I_j}{3}$. Then, there exists a string $\rho$ of length $K_{g_N(b)}(A\upharpoonright \max I_j+1)\leq \frac{\max I_j}{3}$ such that $\mathbb{U}_{g_N(b)}(\rho)=A\upharpoonright \max I_j+1$. Since $\mathbb{U}$ can simulate the machine $M$ in time $g_M(g_N(t))$, there is a string $\rho_0$ of length at most $d_M+K_{g_N(b)}(A\upharpoonright \max I_j+1)$ such that $\mathbb{U}_{g_M(g_N(b))}(\rho_0)=\tau$. But then $|I_j|\leq K_{g_M(g_N(b))}(\tau)\leq d_M+K_{g_N(b)}(A\upharpoonright \max I_j+1)\leq \frac{\max I_j}{3}+d_M\leq |I_j|-1$ since $j$ is large enough. Contradiction. Therefore, the claim holds.
\par
We claim that there is $c_i$ such that for $j$ and $m$ as in the statement, $c_im\leq \frac{\max I_j}{4}$. Note that $\max I_j=2^{j+1}-2$ and $\varphi_i$ is assigned to every $2^{i+1}$th interval, we can choose $c_i=2^{-2^{i+1}-3}$.
\par
Now we define another machine $N=N_i$ as follows. On input $\rho$, run the universal prefix-free machine $\mathbb{U}$ on $\rho$. If $\mathbb{U}$ outputs $\gamma$ and $\max I_k+1<|\gamma|\leq\max I_{k+2^{i+1}}+1$ where the interval $I_k$ is assigned to $\varphi_i$, let $N$ outputs the string $\gamma\upharpoonright \max I_k+1$. Therefore, $\mathbb{U}$ can simulate $N$ on input $\rho$ in time $g_{N}(t)$, for some computable function $g_N$, if $\mathbb{U}$ runs in time $t$ when $N$ outputs $\gamma\upharpoonright \max I_k+1$. Let $d_{N}$ be the coding constant for the machine $N$.
\par
Let $j$ be large enough. Suppose that $K^{\varphi_i}(A\upharpoonright m+1)<c_im -d_N$. Because $b\geq \varphi_i(m+1)$, we have $K_b(A\upharpoonright m+1)\leq K^{\varphi_i}(A\upharpoonright m+1)<c_im-d_N$. Then, there exists a string $\rho$ of length smaller than $c_im-d_N$ such that $\mathbb{U}_b(\rho) =A\upharpoonright m+1$. Because $\mathbb{U}$ can simulate the machine $N$ in time $g_N(t)$, there is a string $\rho_0$ of length smaller than $c_im-d_N+d_N$ such that $\mathbb{U}_{g_N(b)}(\rho_0)=A\upharpoonright \max I_j+1$. But then $K_{g_N(b)}(A\upharpoonright \max I_j+1)<c_im-d_N+d_N=c_im\leq \frac{\max I_j}{4} <\frac{\max I_j}{3}$, contradicting the previous claim. So we can take $d_i=d_N$ to satisfy the lemma.
\end{proof}
\begin{lem}\label{upperbound}
$K(A\upharpoonright m)$ has an upper bound $O(\log(m))$.
\end{lem}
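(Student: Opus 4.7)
I plan to apply the Kraft--Chaitin theorem to build a prefix-free machine $M$ with $K_M(A \upharpoonright m) \leq 3 \log m + O(1)$; optimality of $\mathbb{U}$ then gives $K(A \upharpoonright m) \leq K_M(A \upharpoonright m) + d_M = O(\log m)$. During the construction, whenever $A_s \upharpoonright m$ is defined and differs from $A_{s-1} \upharpoonright m$, I enumerate the request $(\lceil 3 \log m \rceil + d, A_s \upharpoonright m)$ for a constant $d$ to be fixed below. If $C(m)$ denotes the total number of distinct values taken by $A_s \upharpoonright m$ over all stages $s$, the total weight is at most $2^{-d} \sum_m C(m)/m^3$, which is at most $1$ for large enough $d$ provided $C(m) = O(m)$.

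So it suffices to show $C(m) = O(m)$. Changes to $A_s \upharpoonright m$ arise from two sources. For the move-in contribution, let $M(j)$ denote the total number of times $I_j$ is moved in during the whole construction. A move-in of $I_j$ is permitted only when $I_j$ has not yet been moved in or has since been marked fresh, and $I_j$ is marked fresh only when some $I_{j'}$ with $j' < j$ is moved in while $I_j$ is in the ``moved-in'' state. Hence $M(j) \leq 1 + \sum_{j' < j} M(j')$, and by induction $M(j) \leq 2^j$. Since $\max I_j = 2^{j+1} - 2 < m$ forces $j \leq \log m$, the total move-in contribution to $C(m)$ is at most $\sum_{j \leq \log m} 2^j = O(m)$. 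For the $G_e$-extension contribution, each $G_e$-node $\alpha$ acts at most once, since once $\alpha$ acts its current outcome becomes $s$ and stays $s$ by Lemma \ref{pi2}; if $\alpha$'s action affects $A \upharpoonright m$ then the extension point satisfies $n \geq l_\alpha \geq 2^{2^{e+1} + 2^{c_\alpha+1}}$ and $n < m$, forcing both $e$ and $c_\alpha$ to be $O(\log \log m)$. Because $c_\alpha$ counts previously initialized $G_e$-nodes, at most one $G_e$-node has any given value of $c_\alpha$, so at most $O((\log \log m)^2)$ such extensions affect $A \upharpoonright m$. Combining gives $C(m) = O(m)$.

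The main obstacle is justifying the recursion $M(j) \leq 1 + \sum_{j' < j} M(j')$: each re-activation of $I_j$ must be preceded by some earlier interval being moved in, and interactions with $G_e$-extensions (which may temporarily overwrite $A$ on $I_j$ or neighboring intervals, and which are themselves counted separately) must not introduce unaccounted move-ins. The $O(m)$ bound on $C(m)$ is deliberately loose; the target $3 \log m + O(1)$ leaves considerable slack, and the specific choice $l_\alpha \geq 2^{2^{e+1}+2^{c_\alpha+1}}$ in the $G_e$-strategy is precisely what makes the $G_e$-extension contribution polylogarithmic and hence absorbable into the move-in term.
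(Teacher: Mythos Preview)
Your proof is correct and takes a genuinely different route from the paper's. The paper argues via plain complexity: it observes that $A\upharpoonright m$ is determined by only $O(\log m)$ bits of side information, namely which of the $\log m+1$ intervals $I_j$ with $\max I_j<m$ are in their final moved-in state and which of the $O((\log\log m)^2)$ relevant $G$-nodes have acted, and then builds a single machine that reads this description together with $m$, simulates the construction until the recorded configuration is reached, and outputs the first $m$ bits. Your approach instead enumerates a Kraft--Chaitin request for every distinct value of $A_s\upharpoonright m$ and bounds the total number of such values by $O(m)$ via the recursion $M(j)\le 1+\sum_{j'<j}M(j')$ on move-in counts (which is justified exactly as you say: $G_e$-extensions never mark intervals fresh, so every re-move-in of $I_j$ is preceded by a distinct move-in event of some earlier interval). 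The paper's argument extracts sharper structural information---$A\upharpoonright m$ is pinned down by logarithmically many bits---whereas your argument is more modular: it never needs to argue that the simulating machine halts at the ``right'' stage, only that the approximation $A_s\upharpoonright m$ changes at most polynomially often, which is enough for the weight $\sum_m C(m)/m^3$ to converge.
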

\begin{proof}
It suffices to show that $C(A\upharpoonright m)$ is bounded by $O(\log(m))$ since $K(x)\leq^+C(x)+2\log(|x|)$ for any $x$. Here, $\log n=\max\lb k\in\N:2^k\leq n\rb$. First, note that very few $G_e$ nodes can affect $A\upharpoonright m$. By definition, $l_{\alpha}>2^{2^{e+1}}$ for any $G_e$ node, so $e<\log(\log(m))$. For each such $e$, since $l_{\alpha}>2^{2^{c_{\alpha}+1}}$ for any $G_e$ node $\alpha$, there are at most $\log(\log(m))$ many nodes that can affect $A\upharpoonright m$. Also, there are $\log(m)+1$ many intervals before or containing $m-1$. Therefore, we can code whether each of such intervals is moved in without being marked fresh again, and whether each of the first $\log(\log(m))$ $G_e$ nodes visited for each $e< \log(\log(m))$ acts or not with a string of length $L_m=\log(m)+1+(\log(\log(m)))^2$. Note that $L_m<2\log(m)$ when $m>2^{2^5}$. We can add zeros at the end of the string to make a string of length $2\log(m)$.
Now we define a machine $M$ that has input $\sigma\tau$ where $|\sigma|=2\log(m)$, $|\tau|=\log(m)+1$, and $m>2^{2^5}$. Also, $\tau$ is the binary string for number $m$. The machine $M$ can acquire $\sigma$ and $\tau$ from the length $|\sigma\tau|$. Next, it will simulate the construction of $A$ until the current outcomes of $G_e$ nodes and intervals moved in coincides with the information encoded in $\sigma$. Then, $M$ outputs the first $m$ bits of the string it has constructed. There exists a $\sigma$ such that $M$'s output is $A\upharpoonright m$ with such $\sigma\tau$ as input because any $G_e$ node could only act once and an interval cannot be marked fresh unless an interval before it is moved in. Therefore, $C(A\upharpoonright m)$ is bounded by $O(\log(m))$ for any $m$. 
\end{proof}
\begin{lem}
$A$ is deep.
\end{lem}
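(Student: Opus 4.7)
The plan is to show that for every computable time bound $t$ and every $c \in \N$, we have $K^t(A \upharpoonright n) - K(A \upharpoonright n) \geq c$ for cofinitely many $n$. Fix such $t$ and $c$, and choose an index $i$ so that $\varphi_i$ is a total computable order function dominating $t$; for instance, $\varphi_i(n) = n + \max_{k \leq n} t(k)$ will work. Since $\varphi_i \geq t$ pointwise, $K^t(x) \geq K^{\varphi_i}(x)$ for every $x$, so it suffices to prove the inequality with $K^{\varphi_i}$ in place of $K^t$. Because $\varphi_i$ is total, the $D_i$-strategy node $\alpha$ on the true path $f$ has outcome $\infty$, so step 4 of the procedure is visited infinitely often; combined with Lemma \ref{interval}, cofinitely many intervals $I_j$ assigned to $\varphi_i$ are moved in a last time at some stage $s_j$ and are never marked fresh thereafter.

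Now fix a large $m$ and let $j$ be the unique index with $I_j$ assigned to $\varphi_i$ and $\max I_j < m \leq \max I_{j+2^{i+1}}$. We distinguish two cases. In the \emph{unperturbed} case, no true-path $G_e$-node extends through $I_j$ after stage $s_j$: the hypotheses of Lemma \ref{lowerbound} are met, yielding $K^{\varphi_i}(A \upharpoonright m+1) \geq c_i m - d_i$, which combined with Lemma \ref{upperbound}'s bound $K(A \upharpoonright m+1) = O(\log m)$ forces $K^{\varphi_i} - K$ to tend to infinity and hence exceed $c$ for all sufficiently large $m$. In the \emph{perturbed} case, some true-path $G_e$-node $\alpha_G$ with outcome $s$ and $i \leq e$ has extended through $I_j$ at a stage after $s_j$; then the bounded-request compression built into the $G_e$-strategy---valid because the weights $w_{\alpha,i} = 2^{-2e-i-2c_\alpha-3}$ are chosen so that the total weight of enumerated requests summed over all $e$, all $G_e$-nodes, and all $i \leq e$ telescopes (via a geometric series in $i$, $c_\alpha$, and $e$) to at most $1$---delivers $K(A \upharpoonright m+1) \leq K^{\varphi_i}(A \upharpoonright m+1) - e - c_{\alpha_G} + O(1)$, so $K^{\varphi_i} - K \geq c$ whenever $e + c_{\alpha_G}$ is sufficiently large.

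It remains to show that the $m$ for which neither case delivers $\geq c$ form a finite set. Such $m$ either lie inside a perturbed-case region with $e + c_{\alpha_G} < c + O(1)$---only finitely many such true-path nodes $\alpha_G$ can exist, since both $e$ and $c_{\alpha_G}$ are then bounded and each $\alpha_G$ acts only once with a finite concatenating segment---or they lie in some interval $I_j$ that was moved in by $D_i$ before the action of a true-path $G_e$-node with $e < i$ (which is not required to respect $\varphi_i$-intervals and whose strategy does not supply compression for $i > e$). Only finitely many such intervals exist per such $G_e$ (since only finitely many moves occur before any fixed stage $s_{G_e}$), and there are at most $i$ such $G_e$-nodes on the true path, so these contributions are also finite. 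The union of the bad regions is therefore finite, so $K^t(A \upharpoonright n) - K(A \upharpoonright n) \geq K^{\varphi_i}(A \upharpoonright n) - K(A \upharpoonright n) \geq c$ for cofinitely many $n$, and $A$ is deep. The main obstacle is the perturbed-case compression: one must verify carefully that the KC-requests enumerated across all $G_e$-nodes form a single bounded request set of total weight at most $1$ so that the KC-theorem produces a single prefix-free machine whose coding constant vanishes into the $O(1)$ term, and then confirm that the two-case split genuinely covers cofinitely many $m$ after the finitely many bad positions are excised.
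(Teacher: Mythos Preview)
Your overall architecture—split into an unperturbed case (Lemma~\ref{lowerbound} plus Lemma~\ref{upperbound}) and a perturbed case (the KC-compression from the $G_e$-strategy), with a residual finite set of bad $m$—matches the paper's. The gap is that your case split is phrased entirely in terms of \emph{true-path} $G_e$-nodes, whereas the hypothesis of Lemma~\ref{lowerbound} is that \emph{no} $G_e$-node whatsoever extends through $I_j$ after its last move-in. Off-path $G_e$-nodes $\alpha$ with $e<i$ lying to the right of the true-path $D_i$-node $\gamma$ can act and overwrite a $\varphi_i$-interval $I_j$ beyond $l_\gamma$; since $e<i$, such an $\alpha$ enumerates no compression requests for $\varphi_i$, so neither your unperturbed nor your perturbed case covers this. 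Crucially, there can be \emph{infinitely many} such $\alpha$: for instance, below each finitary outcome $w_l$ of a higher-priority $D_t$-node whose true outcome is $\infty$ there sits a distinct $G_e$-node that is visited and may act once. Your finiteness count ``at most $i$ such $G_e$-nodes on the true path'' does not bound these, so you cannot yet conclude that the exceptional $m$ form a finite set.

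The paper closes this gap with a tree-combinatorial argument. For any such off-path $\alpha$ one takes the longest common prefix $\tau$ of $\alpha$ and $\gamma$; if $\tau$ is a $G_{e'}$-node then $\gamma$ is initialized after $\alpha$ acts and $l_\gamma$ already lies beyond $\alpha$'s extension, while if $\tau$ is a $D_t$-node (with $t\le e$) one argues that some $\varphi_t$-interval below $l_\alpha$ was left un-moved-in when $\alpha$ was initialized, and when the true-path $D_t$-strategy later fills it, $I_j$ is marked fresh and subsequently moved in again by $\gamma$. The paper must also rule out the possibility that another $D_t$-node $\eta$ fills that gap \emph{before} $\alpha$ acts, which it does via the $\Pi_2$-Lemma and the impossibility of certain outcome sequences at the branch node between $\alpha$ and $\eta$. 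This argument is exactly what guarantees that after the \emph{final} move-in of $I_j$, no $G_e$-node with $e<i$ overwrites it—hence that your unperturbed case genuinely satisfies Lemma~\ref{lowerbound}'s hypothesis. You need to supply this piece.
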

\begin{proof}
We are going to show that each $D_i$ requirement is met. Assume $\varphi_i$ is an order function. Let $\gamma$ be the $D_i$ node on the eventual true path. For any $c$, by Lemma \ref{lowerbound} and \ref{upperbound}, there is a large enough $n_0$ such that, if $m\geq n_0$ and no $G_e$ strategies changed the string in the interval $I_j$ that is assigned to $\varphi_i$ and $\max I_j<m\leq \max I_{j+2^{i+1}}$, $K^{\varphi_i}(A\upharpoonright m+1)>K(A\upharpoonright m+1)+c$.
\par
Let $d_0$ be the coding constant corresponding to the machine obtained by the bounded request set $R$ defined in the $G_e$-strategy. Notice there are only finitely many $\alpha$ such that $e+c_{\alpha}\leq c+d_0$. So, there is a large enough $n_1$ such that $n_1$ is beyond the concatenating segment of the extension made by any such $\alpha$ that overlaps a moved-in $\phi_i$ interval.
\par
Next, we consider the case that a $G_e$ node $\alpha$ with $e+c_{\alpha}>c+d_0$ extends an initial segment of $A_{s-1}$ and changes the string in the $I_j$ interval, which is assigned to $\varphi_i$, moved in, and beyond $l_{\gamma}$. When $i>e$, there are three cases to consider: $\alpha\prec\gamma$, $\alpha$ is to the left of $\gamma$, and $\alpha$ is to the right of $\gamma$. We do not worry about the first two cases since $\gamma$ would be initialized by Lemma \ref{pi2} and $l_{\gamma}$ would be beyond the extension. For the third case, there is a node $\tau$ that is the longest shared prefix of $\gamma$ and $\alpha$. Node $\tau$ is either a $G_{e'}$ node with $e'<e$ that makes an extension, or a $D_t$ node with $t\leq e$ that realizes it made a wrong guess. For the former, $l_{\gamma}$ is beyond the extension $\alpha$ made as well because $\gamma$ will be initialized after $\tau$'s action. For the latter, $I_j$ will be marked fresh and moved in again because there is one interval before $l_{\alpha}$ assigned to $\phi_t$ not moved in when $\alpha$ is initialized. However, there is a risk that this interval will be moved in by a $D_t$ node $\eta$ before $\alpha$ acts. Suppose such an $\eta$ exists. Notice that $\gamma$ is below $\tau$'s infinite outcome while $\alpha$ is below one of $\tau$'s finite outcomes since $\gamma$ is on the true path and $\alpha$ is to the right of it. So, $\tau$ cannot be this $\eta$ because $\alpha$ cannot be visited again to act after the interval is moved in by Lemma \ref{pi2}. Therefore, $\alpha,\eta$ are not comparable and one of them is to the right of the other. Then, both $\alpha$ and $\eta$ would be visited twice as the following must happen in sequence: $\eta$ initializes, $\alpha$ initializes, $\eta$ moves in the interval, and $\alpha$ acts. There is no risks when $\alpha$ initializes before $\eta$ initializes because in this case $l_{\eta}$ is beyond $l_{\alpha}$, which is beyond that interval assigned to $\phi_t$. Let $\beta$ be the longest shared initial segment of $\alpha,\eta$. It is impossible for $\beta$ to visit outcomes $w,s,w$, or $w_x,w_y,w_x$, or $w_x,\infty,w_x$, where $w_y$ is to the left of $w_x$. So no matter which of $\alpha$ or $\eta$ is on the right, the above sequence is impossible. Contradiction. So, the risk is cleared. When $i\leq e$, we utilize the compressions we have made. We know that the weight of the request set $R$ is no larger than $1$ since 
\begin{align*}
    \sum_{e=0}^{\infty}\sum_{\alpha\in P_e}\sum_{i=0}^{e}\sum_{\theta\in N_{\alpha,i}}2^{e+c_{\alpha}-K^{\varphi_i}(\theta)} & \leq\sum_{e=0}^{\infty}\sum_{\alpha\in P_e}\sum_{i=0}^{e}2^{e+c_{\alpha}}2^{-2e-i-3-2c_{\alpha}}\\ & =\sum_{e=0}^{\infty}\sum_{\alpha\in P_e}\sum_{i=0}^{e}2^{-e-i-3-c_{\alpha}}\leq 1,
\end{align*}
where $P_e$ is the set of all $G_e$ nodes visited.
Then, $K^{\varphi_i}(A\upharpoonright m+1)\geq K(A\upharpoonright m+1)+e+c_{\alpha}-d_0> K(A\upharpoonright m+1)+c$ will hold for $\max I_j<m\leq \max I_{j+2^{i+1}}$. By our definition of $N_{\alpha,i}$, there are requests in $R$ ensuring this since the $\theta$'s in $N_{\alpha,i}$ can be any string of length between $\max I_j$ and $\max I_{j+2^{i+1}}$. 
\par
Therefore, for any $i$ such that $\varphi_i$ is an order function, for any $c$, there is an $n>n_0,n_1,l_{\gamma}$ such that $K^{\varphi_i}(A\upharpoonright m+1)>K(A\upharpoonright m+1)+c$ for all $m\geq n$.
\end{proof}

%
%

\bibliographystyle{plainurl}
\bibliography{references} 

\end{document}